\documentclass[11pt]{amsart}
\usepackage[margin=1in]{geometry}

\usepackage{amssymb}
\usepackage{amsthm}
\usepackage{amsmath}
\usepackage{mathrsfs}
\usepackage{amsbsy}
\usepackage[all]{xy}
\usepackage{bm}
\usepackage{hyperref}
\usepackage{tikz}
\usepackage{array}
\usepackage{float}
\usepackage{enumerate}
\usepackage{xcolor}
\usepackage{hhline}
\setlength{\parskip}{0em}
\allowdisplaybreaks
\usepackage[noadjust]{cite}

\usepackage{comment}

\usepackage[noabbrev,capitalise,nameinlink]{cleveref}

\definecolor{lavender}{rgb}{0.4,0,1.0}

\hypersetup{colorlinks=true, citecolor=lavender, linkcolor=lavender,urlcolor=lavender}

\usepackage[singlelinecheck=false]{caption}
\usepackage{tabu}
\usepackage{diagbox}

\definecolor{SockYellow}{rgb}{1,0.85,0}
\definecolor{SockGreen}{rgb}{0,0.5,0}

\newenvironment{enumerate*}%
  {\begin{enumerate}[(I)]%
    \setlength{\itemsep}{10pt}%
    \setlength{\parskip}{0pt}}%
  {\end{enumerate}}

\newtheorem{theorem}{Theorem}[section]
\newtheorem{proposition}[theorem]{Proposition}

\newtheorem{question}[theorem]{Question}

\newtheorem{lemma}[theorem]{Lemma}

\theoremstyle{definition}

\newtheorem{remark}[theorem]{Remark}

\newcommand{\foot}{\mathsf{foot}}

\newcommand{\Av}{\mathrm{Av}}

\newcommand{\sockr}{\includegraphics[height=.8cm]{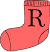}}
\newcommand{\sockb}{\includegraphics[height=.8cm]{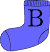}}
\newcommand{\sockg}{\includegraphics[height=.8cm]{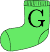}}
\newcommand{\socky}{\includegraphics[height=.8cm]{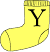}}

\newcommand{\ssockr}{\includegraphics[height=.6cm]{SockR}}
\newcommand{\ssockb}{\includegraphics[height=.6cm]{SockB}}
\newcommand{\ssockg}{\includegraphics[height=.6cm]{SockG}}
\newcommand{\ssocky}{\includegraphics[height=.6cm]{SockY}}
\newcommand{\ssockp}{\includegraphics[height=.6cm]{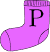}}

\newcommand{\dfn}[1]{\textcolor{blue}{\emph{#1}}}

\definecolor{MyGreen}{rgb}{0,0.86,0}

\begin{document}

\title[]{Foot-sorting for socks}

\author[Colin Defant and Noah Kravitz]{Colin Defant}
\address[]{Department of Mathematics, Massachusetts Institute of Technology, Cambridge, MA 02139, USA}
\email{colindefant@gmail.com}
\author[]{Noah Kravitz}
\address[]{Department of Mathematics, Princeton University, Princeton, NJ 08540, USA}
\email{nkravitz@princeton.edu}

\maketitle

\begin{abstract}
If your socks come out of the laundry all mixed up, how should you sort them?  We introduce and study a novel \emph{foot-sorting} algorithm that uses feet to attempt to sort a sock ordering; one can view this algorithm as an analogue of Knuth's stack-sorting algorithm for set partitions. The sock orderings that can be sorted using a fixed number of feet are characterized by Klazar's notion of set partition pattern containment. We give an enumeration involving Fibonacci numbers for the $1$-foot-sortable sock orderings within a naturally-arising class.  We also prove that if you have socks of $n$ different colors, then you can always sort them using at most $\left\lceil\log_2(n)\right\rceil$ feet, and we use a Ramsey-theoretic argument to show that this bound is tight. 
\end{abstract}

\section{Introduction}
\vspace{-0.2cm}
\begin{figure}[h]
\begin{center}\includegraphics[width=0.3\linewidth]{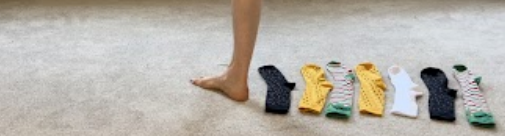}
\includegraphics[width=0.3\linewidth]{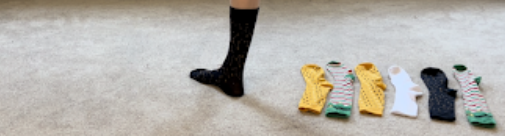}
\includegraphics[width=0.3\linewidth]{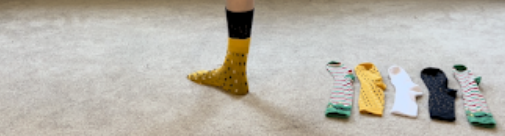} \\ \vspace{0.2cm}
\includegraphics[width=0.3\linewidth]{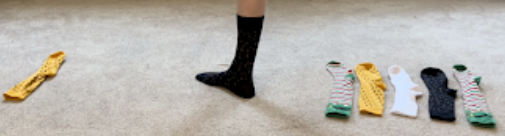}
\includegraphics[width=0.3\linewidth]{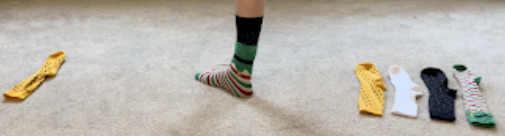}
\includegraphics[width=0.3\linewidth]{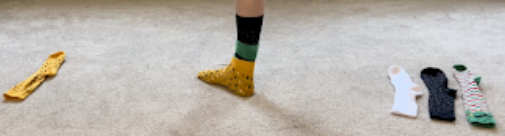} \\ \vspace{0.2cm}
\includegraphics[width=0.3\linewidth]{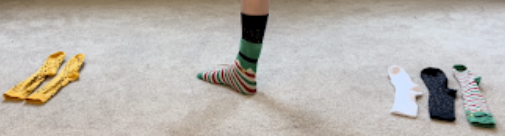}
\includegraphics[width=0.3\linewidth]{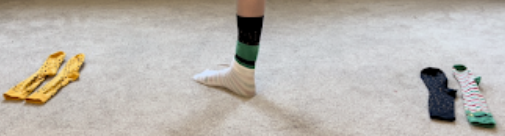}
\includegraphics[width=0.3\linewidth]{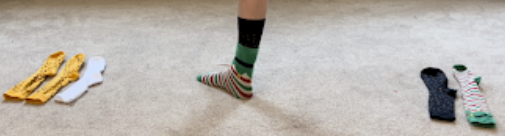} \\ \vspace{0.2cm}
\includegraphics[width=0.3\linewidth]{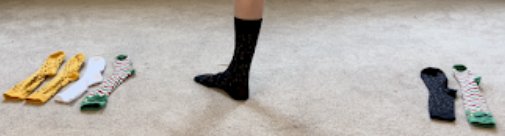}
\includegraphics[width=0.3\linewidth]{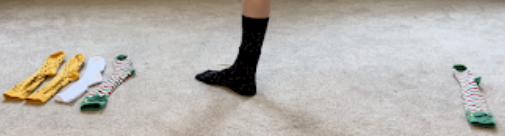}
\includegraphics[width=0.3\linewidth]{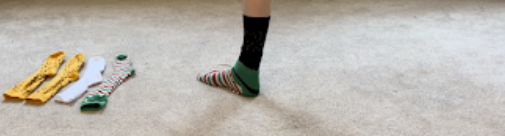} \\ \vspace{0.2cm}
\includegraphics[width=0.3\linewidth]{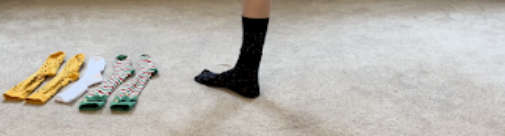}
\includegraphics[width=0.3\linewidth]{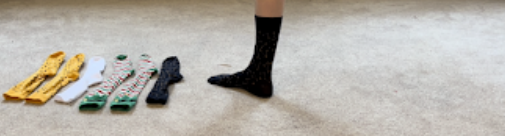}
\includegraphics[width=0.3\linewidth]{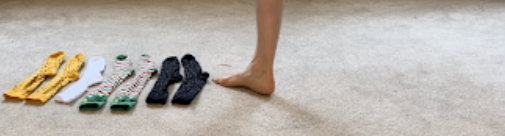}
  \end{center}
  \caption{Seven socks begin in the (unsorted) sock ordering \begin{center}\textsf{\textbf{BLACK {\color{SockYellow}YELLOW} {\color{SockGreen}P}{\color{red}E}{\color{SockGreen}P}{\color{red}P}{\color{SockGreen}E}{\color{red}R}{\color{SockGreen}M}{\color{red}I}{\color{SockGreen}N}{\color{red}T} {\color{SockYellow}YELLOW} {\includegraphics[height=0.2688cm]{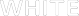}} BLACK {\color{SockGreen}P}{\color{red}E}{\color{SockGreen}P}{\color{red}P}{\color{SockGreen}E}{\color{red}R}{\color{SockGreen}M}{\color{red}I}{\color{SockGreen}N}{\color{red}T}}}.\end{center}  A real person uses the foot-sorting algorithm to sort these socks into \begin{center}\textsf{\textbf{{\color{SockYellow}YELLOW} {\color{SockYellow}YELLOW} {\includegraphics[height=0.2688cm]{WHITE}} {\color{SockGreen}P}{\color{red}E}{\color{SockGreen}P}{\color{red}P}{\color{SockGreen}E}{\color{red}R}{\color{SockGreen}M}{\color{red}I}{\color{SockGreen}N}{\color{red}T}  {\color{SockGreen}P}{\color{red}E}{\color{SockGreen}P}{\color{red}P}{\color{SockGreen}E}{\color{red}R}{\color{SockGreen}M}{\color{red}I}{\color{SockGreen}N}{\color{red}T} BLACK BLACK}}.\end{center} The pictures should be read row by row.}\label{Fig2}
\end{figure}
\vspace{-0.5cm}
\subsection{How to sort your socks while standing on one foot}
You have a lot of socks, and they are all arranged in a line.  We call this a \dfn{sock ordering}. Unfortunately, the colors of the socks are all mixed up. You would like to fix this by sorting the socks into a line so that all socks of the same color are next to each other. Even more unfortunately, the only thing you know how to do with a sock is put it onto or take it off of your foot. For the moment, let us assume that you can put socks onto only one foot. Thus, you must use the following (non-deterministic) \dfn{foot-sorting algorithm} to try sorting your socks. 

You begin by positioning yourself so that your unsorted line of socks is to your right. At each point in time, you can do one of the following operations: 
\begin{itemize}
\item Take the leftmost sock that lies to your right, and put it onto your foot, putting it over all other socks that are already on your foot.
\item Remove the outermost sock from your foot, and place it to your left, placing it to the right of all other socks to your left.
\end{itemize}
\Cref{Fig1} shows a real-life person using this algorithm to sort 7 socks. 
Let us say a sock ordering is \dfn{sorted} if all of the socks of each color appear consecutively.

A sock ordering is \dfn{foot-sortable} if it can be sorted (i.e., transformed into a sorted sock ordering) using this foot-sorting algorithm. We will consider two sock orderings to be the same if one is obtained from the other by renaming the colors. For example, \[\begin{array}{l}\sockb\sockr\sockg\sockr\end{array}\quad\text{and}\quad \begin{array}{l}\sockr\socky\sockg\socky\end{array} \] represent the same sock ordering.  

\subsection{Multiple feet}
Most people have more than one foot; we can naturally generalize the foot-sorting algorithm to take advantage of this fact. Suppose you have $t$ feet arranged in a line (you might need to borrow some friends' feet if $t>2$). As before, position the feet so that the socks are to their right. Let us number the feet $f_1,\ldots,f_t$ from right to left. At each point in time, you can do one of the following operations: 
\begin{itemize}
\item Take the leftmost sock that lies to the right of the feet, and put it onto the foot $f_1$.
\item For some $1\leq i\leq t-1$, remove the outermost sock from the foot $f_i$, and put it on $f_{i+1}$.
\item Remove the outermost sock from the foot $f_t$, and place it to the left of the feet, placing it to the right of all other socks that are to the left of the feet.
\end{itemize}

\begin{figure}[h]
\begin{center}\includegraphics[width=0.216\linewidth]{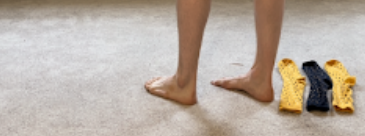}
\includegraphics[width=0.216\linewidth]{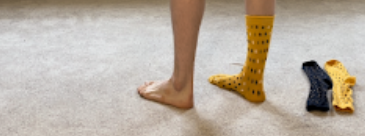}
\includegraphics[width=0.216\linewidth]{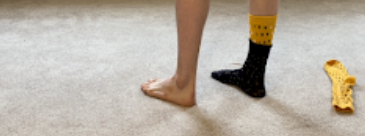}  \vspace{0.2cm}
\includegraphics[width=0.216\linewidth]{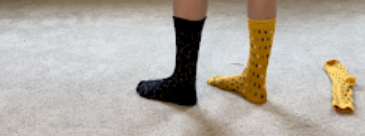} \\
\includegraphics[width=0.216\linewidth]{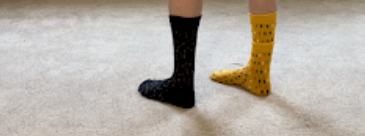}
\includegraphics[width=0.216\linewidth]{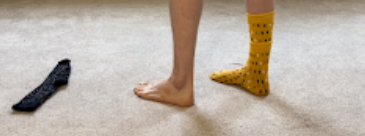}  \vspace{0.2cm}
\includegraphics[width=0.216\linewidth]{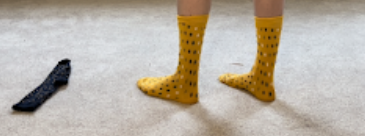}
\includegraphics[width=0.216\linewidth]{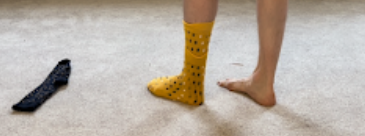} \\
\includegraphics[width=0.216\linewidth]{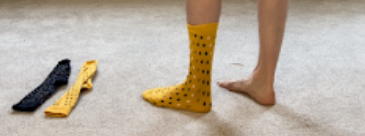}  \vspace{0.2cm}
\includegraphics[width=0.216\linewidth]{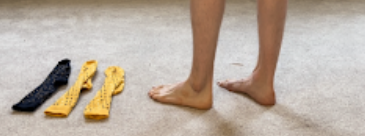}
  \end{center}
  \caption{Three socks begin in the (unsorted) sock ordering \begin{center}\textsf{\textbf{{\color{SockYellow}YELLOW} BLACK {\color{SockYellow}YELLOW}}}.\end{center}  A real person uses the $2$-foot-sorting algorithm to sort these socks. The pictures should be read row by row. }\label{Fig3}
\end{figure}

Let us say a sock ordering is \dfn{$t$-foot-sortable} if it can be sorted using this \dfn{$t$-foot-sorting algorithm} (see \Cref{Fig3}).  It is straightforward to verify that applying the $t$-foot-sorting algorithm is equivalent to applying the (ordinary) foot-sorting algorithm $t$ times since one can use the spaces between adjacent feet as queues. 

\subsection{Set partitions}\label{sec:set-partitions}

A \dfn{set partition} of a set $X$ is a collection $\{B_1,\ldots,B_n\}$ of nonempty pairwise-disjoint subsets of $X$ such that $\bigcup_{i=1}^nB_i=X$. The sets $B_1,\ldots, B_n$ are called the \dfn{blocks} of the partition. Throughout this article, we will always assume that $X$ is a finite set of positive integers. 

Suppose we have $N$ socks, and let $n$ be the total number of different sock colors. We consider socks of the same color to be identical, so we can view a sock ordering as a set partition of the set $[N]:=\{1,\ldots,N\}$ with $n$ blocks: Two numbers $i$ and $j$ lie in the same block of the partition if and only if the socks in positions $i$ and $j$ have the same color. For example, the sock ordering 
\[\begin{array}{l}\sockr\sockb\sockr\sockr\sockg\sockb\end{array}\]
corresponds to the set partition $\{\{{\color{red}1},{\color{red}3},{\color{red}4}\},\{{\color{blue}2},{\color{blue}6}\},\{{\color{MyGreen}5}\}\}$.  By changing the names of the colors, we can also represent this set partition by $abaacb$ (using the color set $\{a,b,c\}$) or $121132$ or $131123$ (using the color set $\{1,2,3\}$).  In what follows, we will tacitly think of set partitions of $[N]$ and sock orderings of length $N$ as the same objects, so it makes sense to discuss \emph{foot-sortable} and, more generally, \emph{$t$-foot-sortable} set partitions.

\subsection{Sock patterns}\label{sec:sock-patterns}
Much research in combinatorics over the last few decades has focused on the notions of \emph{patterns} and \emph{pattern avoidance} for combinatorial objects. There is a natural notion of pattern containment and avoidance for sock orderings. We say the sock ordering $\rho$ \dfn{contains} the sock ordering $\rho'$ if $\rho'$ can be obtained from $\rho$ by deleting some socks. We say $\rho$ \dfn{avoids} $\rho'$ if $\rho$ does not contain $\rho'$. For example, \[\begin{array}{l}\sockr\sockb\sockr\socky\sockg\sockr\end{array}\quad\text{contains}\quad \begin{array}{l}\sockb\sockr\sockg\sockr\end{array}, \] while \[\begin{array}{l}\sockr\sockb\sockr\socky\sockg\sockr\end{array}\quad\hspace{.18cm}\text{avoids}\hspace{.18cm}\quad \begin{array}{l}\sockr\sockb\sockr\sockb\end{array}. \] Let us stress again that sock orderings are considered the same if they correspond to the same set partition. For instance, in the first example, we equally could have said that the sock ordering \[\begin{array}{l}\sockr\sockb\sockr\socky\sockg\sockr\end{array}\quad\text{contains}\quad \begin{array}{l}\sockr\socky\sockg\socky\end{array}. \]

Under the correspondence between sock orderings and set partitions, our notions of containment and avoidance for sock patterns correspond precisely to a notion of containment and avoidance of set partitions that was introduced by Klazar \cite{Klazar1, Klazar2} and investigated further in \cite{Alweiss, Balogh, Bloom, Gunby}. 

A sock ordering is sorted (equivalently, $0$-foot-sortable) if and only if it avoids the pattern $aba$. More generally, for every fixed nonnegative integer $t$, the set of $t$-foot-sortable sock orderings is closed under containment. Indeed, suppose $\rho$ is a $t$-foot-sortable sock ordering that contains the sock ordering $\rho'$. When we apply the $t$-foot-sorting algorithm to sort $\rho$, we perform a sequence of moves that put socks onto feet or remove socks from feet. The moves that involve the socks in $\rho'$ will sort $\rho'$ in the $t$-foot-sorting algorithm.

\subsection{Words and permutations}
A \dfn{word} is a finite sequence of positive integers, possibly with repetitions. The \dfn{standardization} of a word $w$ is the word obtained from $w$ by replacing the $i$-th smallest number in $w$ with $i$ for all $i$. A word is \dfn{standardized} if it equals its standardization. For example, the standardization of $425446$ is $213224$, and the word $213224$ is standardized. The numbers appearing in a word are called \dfn{letters}. A \dfn{permutation} is a word whose letters are all distinct. We write $S_n$ for the set of permutations that use the letters $1,2,\ldots,n$. 

Let $v$ be a standardized word. We call $v$ a \dfn{pattern}, and we say a word $w$ \dfn{contains} $v$ if there is a (not necessarily consecutive) subsequence of $w$ whose standardization is $v$. For example, the word ${\bf 35}65{\bf 1}6$ contains the pattern $231$ because the standardization of the subsequence $351$ is $231$. We say $w$ \dfn{avoids} $v$ if it does not contain $v$. Note that this discussion also defines pattern containment and avoidance for permutations. For example, the permutation $5{\bf 34}62{\bf 1}$ contains the pattern $231$ because the standardization of the subsequence $341$ is $231$; on the other hand, $534621$ avoids the pattern $132$. 

Given a word $w$ of length $N$, there is a set partition $\rho$ of $[N]$ in which each block is the set of positions at which a specific letter occurs in $w$; we say $w$ \dfn{represents} the set partition (equivalently, sock ordering) $\rho$. However, different words can represent the same sock ordering; an example is given by the words $1312$ and $2123$, which both represent \[\begin{array}{l}\sockr\sockb\sockr\sockg\end{array}. \]

\subsection{Stack-sorting}
There is long and rich line of research devoted to sorting procedures that use restricted data structures. One of the first instances of this topic appears in Knuth's book \emph{The Art of Computer Programming} \cite{Knuth2}. Knuth discussed how to use a data structure called a \emph{stack} to sort a permutation. One can push objects into the stack and pop objects out of the stack subject to the constraint that the object being popped out must be the object that was most recently pushed in (see \Cref{Fig1}). This is exactly the same as how we can put socks onto a foot and remove socks from a foot subject to the constraint that the sock being removed from the foot must be the sock that was most recently put onto the foot.\footnote{We could have named the present article ``Stack-Sorting for Set Partitions,'' but we preferred phrasing our sorting procedure in terms of socks and feet because we originally encountered this topic in a real-life attempt to sort socks using a stack (and also because it made the presentation cleaner). Only afterward did it occur to us that the stack should very naturally be replaced by a foot. (What you are reading is a literal footnote.)} 

\begin{figure}[ht]
  \begin{center}{\includegraphics[width=\linewidth]{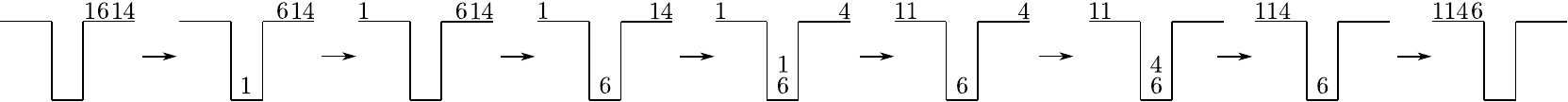}}
  \end{center}
  \caption{Using a stack to sort the word $1614$.}\label{Fig1}
\end{figure}

There are now several well-studied variants of Knuth's original stack-sorting algorithm, including pop-stack-sorting \cite{AtkinsonSack, Avis, SmithVatter}, deterministic (or West) stack-sorting \cite{BonaSurvey, DefantThesis, DefantTroupes, West}, deterministic pop-stack-sorting \cite{ClaessonPop, ClaessonPop2, Pudwell}, (deterministic) left-greedy stack-sorting \cite{Atkinson, Smith}, sorting with pattern-avoiding stacks \cite{Berlow, CerbaiThesis, CerbaiJCTA, DefantZheng}, deterministic stack-sorting for words \cite{DefantKravitzWords}, generalizations to Coxeter groups \cite{DefantCoxeterPop, DefantCoxeterStack}, and so on. As far as we aware, no one has yet studied stack-sorting for set partitions (or socks, for that matter).

\subsection{Main results}

A word is \dfn{sorted} if its letters appear in nondecreasing order.  Note that there are non-sorted words (such as $2211$) that represent sorted sock orderings. The only sorted permutation in $S_n$ is the identity permutation $123\cdots n$. Knuth \cite{Knuth2} characterized the permutations that can be sorted using a single stack via permutation patterns; they are precisely the permutations that avoid that pattern $231$. In fact, this characterization was the original impetus for the now-enormous field of permutation patterns research. In general, the set of permutations that can be sorted by $t$ stacks in series is closed under pattern containment, but when $t\geq 2$, it is very difficult to characterize or enumerate such permutations. Indeed, only recently was it proven that one can decide in polynomial time whether or not a given permutation is 2-stack-sortable \cite{Pierrot}. As shown in \Cref{Fig1}, one can also use a stack to sort words. As in the case of permutations, a word is stack-sortable if and only if it avoids $231$ (the proof is identical to Knuth's proof in the permutation setting). The following theorem relates this fact to sock orderings. 

\begin{theorem}\label{thm:231}
A sock ordering $\rho$ is foot-sortable if and only if there is a $231$-avoiding word that represents $\rho$.
\end{theorem}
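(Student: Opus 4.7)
The plan is to reduce \Cref{thm:231} to the word-version of Knuth's theorem mentioned just before its statement: a word can be stack-sorted (i.e., transformed into a nondecreasing word by some sequence of push/pop moves) if and only if it avoids the pattern $231$. The key observation is that the foot-sorting algorithm on a sock ordering $\rho$ and the stack-sorting algorithm on a word $w$ representing $\rho$ use exactly the same repertoire of moves: ``push the leftmost remaining input item'' and ``pop the topmost item to the output''. These moves are purely positional and do not inspect the labels of the items, so any sequence of moves that is valid for $\rho$ is valid for any $w$ representing $\rho$, and the outputs are compatible under the labeling.

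For the $(\Leftarrow)$ direction, I would take a $231$-avoiding word $w$ representing $\rho$ and apply the word-version of Knuth's theorem to obtain a sequence of push/pop moves that transforms $w$ into a nondecreasing word $w'$. Replaying this same sequence as foot-sorting moves on $\rho$ transforms $\rho$ into the sock ordering represented by $w'$; since $w'$ is nondecreasing, equal letters are consecutive, hence equal colors are consecutive, hence the output is a sorted sock ordering.

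For the $(\Rightarrow)$ direction, I would start with a sequence of foot-sorting moves that sorts $\rho$ and use it to construct a $231$-avoiding representative. Let $c_1, c_2, \ldots, c_k$ denote the colors of the sorted output, read left to right, and let $m_i$ be the number of socks of color $c_i$. Label each sock of color $c_i$ with the integer $i$; this gives a word $w$ representing $\rho$. Replaying the given sequence of moves as stack operations on $w$ produces the nondecreasing word $1^{m_1} 2^{m_2} \cdots k^{m_k}$, so $w$ is stack-sortable, and the word-version of Knuth's theorem (in its contrapositive form) forces $w$ to avoid $231$.

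The only point that deserves care is the ``purely positional'' remark in the first paragraph, which underlies both directions; beyond that, the argument is a clean translation between the two setups, and I do not anticipate a substantive obstacle.
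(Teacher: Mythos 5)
Your proposal is correct and follows essentially the same route as the paper: both directions reduce foot-sorting of $\rho$ to stack-sorting of a representing word via the observation that the push/pop moves are purely positional, and your backward direction is identical to the paper's. The only difference is in the forward direction, where you label the colors by their order in the sorted output and invoke the word version of Knuth's theorem (stack-sortable if and only if $231$-avoiding), whereas the paper constructs the same representing word by running the algorithm in reverse from the sorted word and verifies its $231$-avoidance directly; since the paper states the word version as known just before the theorem, your shortcut is legitimate.
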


In this theorem, it is somewhat unsatisfactory that we have to look at all of the words representing a sock ordering in order to determine whether or not the sock ordering is foot-sortable; it would be more desirable to obtain a characterization purely in terms of set partition patterns.  The latter problem appears to be quite difficult, in analogy with the problem of characterizing (or even enumerating) the $2$-stack-sortable permutations. Although we cannot provide a complete pattern-avoidance characterization of the foot-sortable sock orderings, we can give a very precise description and enumeration when we restrict our attention to sock orderings of a special form.

Let us say a sock ordering is \dfn{$r$-uniform} if there are exactly $r$ socks of each color. The $2$-uniform sock orderings are especially natural examples of sock orderings because socks are often sold and worn in pairs. A set partition corresponding to a $2$-uniform sock ordering is often called a \dfn{matching}. We say a $2$-uniform sock ordering with $2n$ socks is \dfn{alignment-free} if the first $n$ socks all have distinct colors. One can easily check that a $2$-uniform sock ordering is alignment-free if and only if it avoids the pattern\footnote{Our terminology comes from the fact that this pattern corresponds to the set partition $\{\{1,2\},\{3,4\}\}$, which is often called an \dfn{alignment}.} \[\sockr\sockr\sockb\sockb.\] If $\rho$ is an alignment-free $2$-uniform sock ordering with $2n$ socks, then it can be represented by a unique word of the form
$$1 2 \cdots n \sigma_\rho(1) \sigma_\rho(2) \cdots \sigma_\rho(n),$$ where $\sigma_\rho=\sigma_\rho(1) \sigma_\rho(2) \cdots \sigma_\rho(n)$ is a permutation in $S_n$. The map $\rho\mapsto\sigma_\rho$ is a bijection from the set of alignment-free $2$-uniform sock orderings with $2n$ socks to $S_n$. For example, the alignment-free 2-uniform sock ordering \[\begin{array}{l}\sockr\sockb\sockg\socky\sockb\socky\sockg\sockr\end{array} \] corresponds to the permutation $2431$. 

Our next theorem involves the sequence $(F_k)_{k\geq 1}$ of Fibonacci numbers; our convention for indexing this sequence is that $F_1=F_2=1$ and $F_k=F_{k-1}+F_{k-2}$ for all $k\geq 3$. 

\begin{theorem}\label{thm:alignmentless}
Let $\rho$ be an alignment-free $2$-uniform sock ordering of length $2n$, and let \linebreak${\sigma_\rho=\sigma_\rho(1)\cdots\sigma_\rho(n)}$ be the corresponding permutation in $S_n$. Then $\rho$ is foot-sortable if and only if one of the following holds: 
\begin{itemize}
\item $\sigma_\rho(1)<n$, and the permutation $\sigma_\rho(2)\sigma_\rho(3)\cdots\sigma_\rho(n)$ avoids the patterns $123$, $132$, and $213$;
\item $\sigma_\rho(1)=n$, and the permutation $\sigma_\rho(3)\sigma_\rho(4)\cdots\sigma_\rho(n)$ avoids the patterns $123$, $132$, and $213$. 
\end{itemize}
Moreover, there are exactly $(n-1)F_{n+1}$ foot-sortable alignment-free $2$-uniform sock orderings of length $2n$. 
\end{theorem}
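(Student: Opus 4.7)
The plan is to apply \Cref{thm:231} to reduce foot-sortability of $\rho$ to the existence of some $231$-avoiding word representing $\rho$. Up to standardization, each such word has the form
\[
w_\pi := \pi(1)\pi(2)\cdots\pi(n)\,\pi(\sigma_\rho(1))\pi(\sigma_\rho(2))\cdots\pi(\sigma_\rho(n))
\]
for some $\pi \in S_n$, so the task is to determine for which $\sigma_\rho$ some choice of $\pi$ makes $w_\pi$ avoid $231$.

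I would first pin down the admissible $\pi$'s by splitting potential $231$-patterns of $w_\pi$ according to how many of the three indices lie in each half. The case of two indices in the first half and one in the second is very restrictive: since the second half of $w_\pi$ contains every value in $[n]$, any ascent $\pi(i)<\pi(j)$ with $i<j\le n$ and $\pi(i)>1$ completes to a $231$-pattern by pairing with the second-half occurrence of $1$. Consequently, every entry of $\pi$ other than $1$ must exceed every later entry, forcing
\[
\pi=\pi_a:=(n,n-1,\ldots,n-a+2,\,1,\,n-a+1,\ldots,2)
\]
for some $a\in[n]$. The symmetric case (one first-half entry and two second-half entries) forces $\tau:=\pi_a\circ\sigma_\rho$ to have the property that $\tau(j)>\tau(k)$ with $j<k$ implies $\tau(j)=\tau(k)+1$; in other words, every descent of $\tau$ has gap exactly $1$. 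A brief check shows that these two cross-pattern conditions already subsume the pure within-half avoidance (the first-half $\pi_a$ avoids $231$ by inspection, and a permutation whose descents all have gap $1$ cannot contain $231$, since a $231$-pattern would force two entries to coincide). Hence $\rho$ is foot-sortable iff there exists $a\in[n]$ such that $\pi_a\circ\sigma_\rho$ has all descents of gap $1$.

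The remaining step is to convert this existence statement into the conditions of the theorem. I would first establish the combinatorial bridge that a permutation has all descents of gap $1$ iff it avoids $\{231,312,321\}$ (a descent of gap $\ge 2$ paired with an intermediate value always produces one of these three patterns, depending on whether the intermediate entry sits before, between, or after the descent). Then, setting $b:=\tau^{-1}(1)$, I would show $b\in\{1,2\}$: any position $p<b$ satisfies $\tau(p)>1$ and the descent pair $(p,b)$ has gap $\tau(p)-1$, forcing $\tau(p)=2$, so distinctness bounds $b$ by $2$. The case $b=1$ is equivalent to $\sigma_\rho(1)=a$, and the case $b=2$ is equivalent to $\sigma_\rho(2)=a$ together with $\sigma_\rho(1)=\pi_a^{-1}(2)$ (which equals $n$ when $a<n$ and equals $n-1$ when $a=n$). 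A short computation, splitting on whether $\sigma_\rho(i)$ lies above or below $a$, then shows that the standardization of $\tau(b+1)\cdots\tau(n)$ is the complement (in $[n-b]$) of the standardization of $\sigma_\rho(b+1)\cdots\sigma_\rho(n)$. Since complementation carries $\{231,312,321\}$-avoidance to $\{123,132,213\}$-avoidance, this gives exactly the two cases of the theorem, once the sub-cases $a=n$ are absorbed into the correct case (when $a=n$ one obtains a condition slightly stronger than the stated one, which implies it because a $\{123,132,213\}$-pattern cannot involve the largest entry of a permutation that places it first).

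Finally, the enumeration follows by noting that the number $f(m)$ of permutations of $[m]$ avoiding $\{123,132,213\}$ satisfies $f(m)=f(m-1)+f(m-2)$: in such a permutation, the maximum entry must sit at position $1$ or position $2$, and in the latter case the entry at position $1$ is forced to be the second-largest. With the base cases $f(0)=f(1)=1$ we get $f(m)=F_{m+1}$. The first case of the theorem then contributes $(n-1)\,F_n$ sock orderings (one factor of $n-1$ for the choice of $\sigma_\rho(1)\in[n-1]$) and the second contributes $(n-1)\,F_{n-1}$, summing via $F_n+F_{n-1}=F_{n+1}$ to the desired $(n-1)\,F_{n+1}$. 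The main obstacle I anticipate is bookkeeping around the sub-cases $a=n$, which cause a mild overlap between the $b=1$ and $b=2$ worlds that must be reconciled with the theorem's case split without double-counting.
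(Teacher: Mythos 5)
Your proposal is correct, but it reaches the characterization by a genuinely different route than the paper. The paper proves the ``only if'' direction by exhibiting the $14$ minimal non-spread-out alignment-free patterns and checking by computer (plus closure of foot-sortability under pattern containment) that none is foot-sortable, and proves the ``if'' direction by describing an explicit sorting procedure, illustrated on two representative examples; the enumeration step is then the same Fibonacci count you give. You instead route everything through \Cref{thm:231}: every word representing $\rho$ is, up to standardization, a relabeling $w_\pi$ of the canonical word, and you classify exactly when $w_\pi$ avoids $231$. Your split by how a potential $231$ straddles the two halves is sound: the $(2,1)$-split forces $\pi=\pi_a$ (decreasing with $1$ inserted), the $(1,2)$-split forces every inversion of $\tau=\pi_a\circ\sigma_\rho$ to have gap $1$ (equivalently $\tau$ avoids $\{231,312,321\}$), and these conditions subsume the within-half cases. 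The translation back via $b=\tau^{-1}(1)\in\{1,2\}$, the order-reversal of $\pi_a$ on $[n]\setminus\{a\}$ (giving the complementation to $\{123,132,213\}$-avoidance on the suffix of $\sigma_\rho$), and the absorption of the $a=n$ subcases into the theorem's two bullets all check out; the only small details worth writing out are that in the $b\in\{1,2\}$ reduction there are no inversions between the prefix and the suffix (the prefix holds the smallest values, so the suffix condition is also sufficient), and the verification that a maximum entry placed first cannot take part in a $123$, $132$, or $213$ occurrence, which you already note. What your approach buys is a fully self-contained, computer-free proof of both directions at once (and, as a byproduct, an explicit description of the admissible relabelings $\pi_a$); what the paper's approach buys is an explicit list of forbidden sock patterns and a concrete sorting procedure, which is more in the spirit of its discussion of pattern bases in \Cref{sec:further}. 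The Fibonacci recursion and the final count $(n-1)F_n+(n-1)F_{n-1}=(n-1)F_{n+1}$ agree with the paper.
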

We remark that the permutations avoiding the patterns $123$, $132$, and $213$ are precisely the reverses of the layered permutations in which each layer has size $1$ or $2$ (see \Cref{sec:alignmentless}). 

Our last main result states that a person with $n$ different colors of socks can always sort their socks using $\left\lceil\log_2(n)\right\rceil$ feet and that this bound is tight. 

\begin{theorem}\label{thm:log-main}
Every sock ordering with $n$ colors is $\left\lceil\log_2(n)\right\rceil$-foot-sortable. Moreover, there exist sock orderings with $n$ colors that are not $(\left\lceil\log_2(n)\right\rceil-1)$-foot-sortable. 
\end{theorem}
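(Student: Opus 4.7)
The plan is to prove the upper and lower bounds separately.

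\textbf{Upper bound.} I will show by induction on $t$ that every sock ordering with at most $2^t$ colors is $t$-foot-sortable; taking $t = \lceil \log_2 n \rceil$ yields the first assertion. The base case $t = 0$ is immediate, since a one-color ordering is already sorted. For the inductive step, let $\rho$ have at most $2^t$ colors, and partition the color set into halves $L$ and $H$ of size at most $2^{t-1}$ each. The algorithm uses $f_1$ as a router: scan the input left to right, pushing each sock onto $f_1$, and immediately after pushing an $L$-sock move it to $f_2$; every $H$-sock stays on $f_1$. Concurrently, feet $f_2, \ldots, f_t$ execute the inductive $(t-1)$-foot algorithm to sort the stream of $L$-socks, which arrive at $f_2$ in their original relative order as a sock ordering on at most $2^{t-1}$ colors. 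Once the input is exhausted and all $L$-socks have been output in sorted form, drain the $H$-socks one at a time from $f_1$ to $f_2$; they arrive reversed but still form a sock ordering on at most $2^{t-1}$ colors, which the inductive algorithm sorts. The final output is sorted $L$ followed by sorted $H$.

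\textbf{Lower bound.} For the lower bound I would prove by induction on $t$ that, for each $n > 2^t$, there exists $R = R(n, t)$ such that $\rho_{n, R} = (1, 2, \ldots, n)^R$ is not $t$-foot-sortable. The base case $t = 0$ needs only $R = 2$. For the inductive step, suppose $\rho_{n, R}$ admits a $t$-foot sorting. For each block $b \in \{1, \ldots, R\}$, record a finite combinatorial invariant $\phi(b)$ encoding how the $n$ socks of block $b$ traverse the pipeline relative to those of block $b+1$---for instance, for each color $c$ and each foot $f_i$, a bit indicating whether the color-$c$ sock of block $b$ has left $f_i$ by the time its counterpart in block $b+1$ reaches $f_i$. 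Since $\phi$ takes values in a finite set whose size is bounded in terms of $n$ and $t$, Ramsey's theorem yields, for $R$ sufficiently large, an arbitrarily long block subsequence on which $\phi$ is constant. On this homogeneous subsequence, the colors split at $f_1$ into two classes based on the $f_1$-coordinates of $\phi$---the ``fast'' colors that pass through $f_1$ before the next block arrives, and the ``slow'' colors that linger. The LIFO discipline at $f_1$ implies that the sub-orderings restricted to each class must then be sorted by feet $f_2, \ldots, f_t$ (after reversal for the slow class), and these sub-orderings are themselves repetitions of the same form. Choosing $R$ so that the homogeneous subsequence is long enough to exceed $R(k, t-1)$ for the color count $k$ in each class, the induction hypothesis forces both classes to contain at most $2^{t-1}$ colors, whence $n \leq 2^t$, contradicting $n > 2^t$.

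\textbf{Main obstacle.} The main technical challenge is pinning down the invariant $\phi$ so that Ramsey-homogeneity genuinely enforces the dichotomy claim---namely, that on the homogeneous subsequence, the downstream dynamics on feet $f_2, \ldots, f_t$ restricted to the fast class and to the slow class separately constitute valid $(t-1)$-foot sortings of those classes' sub-orderings. Verifying this decoupling under the LIFO interactions shared across the pipeline is where essentially all of the delicate analysis lives, and is where I expect the main technical work to be concentrated.
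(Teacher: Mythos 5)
Your upper bound is correct and is essentially the paper's argument (\Cref{lem:log_first_part}): the first foot splits the colors into two halves, and the remaining $t-1$ feet sort each half by induction, exactly as in Tarjan's stack argument. Nothing to add there.

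The lower bound, however, has a genuine gap, and it is exactly the one you flag yourself: the ``decoupling'' claim. You assert that, on a Ramsey-homogeneous subsequence of blocks, the colors split into a fast class and a slow class, and that the downstream moves on $f_2,\ldots,f_t$, restricted to each class, constitute a valid $(t-1)$-foot sorting of that class's sub-ordering (reversed for the slow class). This is not established, and as stated it is doubtful: your invariant $\phi$ is recorded per block relative to the \emph{next} block, so constancy of $\phi$ along a non-consecutive subsequence of blocks does not control the interaction between two chosen blocks that are far apart (you would at least need Ramsey for pairs of blocks, not pigeonhole on single blocks); a color can be fast relative to its immediate successor block yet its socks can still be reordered by $f_1$ within a block and can interleave with lingering slow socks from many earlier blocks, so neither class's stream arrives at $f_2$ as a clean copy (or clean reversal) of a repetition $(1,\ldots,k)^{R'}$. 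Since the contradiction $n\le 2^t$ rests entirely on applying the induction hypothesis to these two streams, the proof does not go through as written.

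It is worth noting how the paper avoids this obstacle entirely: it never analyzes the joint dynamics of all $t$ feet. Instead it proves a one-pass statement (\Cref{lem:ramsey}): if $\rho$ is an $r^2\binom{n}{\lceil n/2\rceil}$-uniform \emph{stratified} ordering with $n$ colors, then every $\kappa\in\foot(\rho)$ \emph{contains as a pattern} an $r$-uniform stratified ordering with $\lceil n/2\rceil$ colors. The proof is a pigeonhole over the $\binom{n}{\lceil n/2\rceil}$ half-size color sets, with two cases for each chunk: either at least half the colors of some chunk are still on the foot when its last sock enters (these come out reversed, and a reversed stratified ordering is stratified), or in many chunks at least half the colors leave early, and one extracts one sock per color from $r$ such chunks. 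The induction is then on the number of colors, using only the factorization of $t$-foot-sorting as one foot pass followed by $(t-1)$-foot-sorting and the closure of $t$-foot-sortability under pattern containment---which is precisely the tool that replaces the delicate decoupling you would otherwise have to prove. If you want to salvage your outline, the cleanest fix is to restructure it along these lines: work with patterns rather than the exact sub-orderings, and push all the Ramsey/pigeonhole work into a single-foot lemma.
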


Our proof of the first statement in the preceding theorem uses a simple inductive approach; it is completely analogous to an argument that Tarjan \cite{Tarjan} gave to prove that every permutation of size $n$ is $\left\lceil\log_2(n)\right\rceil$-stack-sortable. On the other hand, our proof of the second statement is more delicate and requires a Ramsey-theoretic argument. 

\subsection{Outline}
We prove \Cref{thm:231,thm:alignmentless,thm:log-main} in \Cref{sec:hare,sec:alignmentless,sec:log}, respectively. In \Cref{sec:further}, we provide a laundry list of natural and intriguing open questions about foot-sorting.

\section{Foot-Sorting and Stack-Sorting}\label{sec:hare}

This brief section is devoted to proving \Cref{thm:231}, which says that a sock ordering is foot-sortable if and only if there is a $231$-avoiding permutation representing it.

\begin{proof}[Proof of \Cref{thm:231}]
Let $\rho$ be a sock ordering with $n$ colors. Suppose first that there is a $231$-avoiding word $w$ over the alphabet $[n]$ that represents $\rho$. Then $w$ is stack-sortable, so there is a sequence of moves that sends $w$ through a stack to transform it into a sorted (i.e., nondecreasing) word. Each of these moves corresponds to a move in the foot-sorting algorithm. If we apply these moves to $\rho$, we will succeed in sorting $\rho$. 

To prove the converse, suppose $\rho$ is foot-sortable. Then there is a sorted sock ordering $\kappa$ that can be obtained by applying the foot-sorting algorithm to $\rho$. Let $v$ be the unique sorted word over the alphabet $[n]$ that represents $\kappa$. Imagine ``reversing time'' and running the foot-sorting algorithm in reverse to transform $\kappa$ into $\rho$. Each move in this ``reversed foot-sorting algorithm'' corresponds to a move in a ``reversed stack-sorting algorithm,'' and applying these moves to $v$ will result in a word $w$ that represents $\rho$. Suppose there exist letters $\alpha<\beta<\gamma$ in $v$ such that in the reversed stack-sorting algorithm, $\alpha$ leaves the stack before both $\beta$ and $\gamma$. Then there is a point in time when $\alpha$ sits above $\beta$ and $\beta$ sits above $\gamma$ in the stack, so $\beta$ must leave the stack before $\gamma$. This means that these letters appear in the order $\gamma,\beta,\alpha$ in $w$. In particular, they can never appear in the order $\beta,\gamma,\alpha$ in $w$. This proves that $w$ avoids $231$. 
\end{proof}

\section{Alignment-Free 2-Uniform Sock Orderings}\label{sec:alignmentless}

In this section, we prove \Cref{thm:alignmentless}, which characterizes and enumerates the foot-sortable alignment-free $2$-uniform sock orderings.  Say that an alignment-free 2-uniform sock ordering is \dfn{spread-out} if it satisfies one of the bullet-pointed conditions in the statement of \Cref{thm:alignmentless}.

The \dfn{skew sum} of two permutations $\sigma\in S_{m}$ and $\sigma'\in S_{m'}$ is the permutation $\sigma\ominus\sigma'\in S_{m+m'}$ defined by 
\[(\sigma\ominus\sigma')(i)=\begin{cases} \sigma(i)+m' & \mbox{if } 1\leq i\leq m; \\ \sigma'(i-m) & \mbox{if } m+1\leq i\leq m+m'. \end{cases}\]
Note that the skew sum is an associative operation. It is well known that a permutation avoids the patterns $123$, $132$, and $213$ if and only if it is of the form $\varepsilon_1\ominus\varepsilon_2\ominus\cdots\ominus\varepsilon_k$,
where each $\varepsilon_i$ is either $1$ or $12$  \cite{Bevan, Linton}. (Said differently, a permutation avoids $123$, $132$, and $213$ if and only if it is the reverse of a \emph{layered} permutation whose layers all have size $1$ or $2$.)  It will be convenient to substitute this characterization into the bullet-pointed descriptions in the statement of \Cref{thm:alignmentless}.

We will first show that foot-sortable alignment-free $2$-uniform sock orderings are spread-out.  Let $\rho$ be a foot-sortable alignment-free $2$-uniform sock ordering, and let $\sigma_\rho \in S_n$ denote the corresponding permutation.  There is nothing to show if $n \leq 2$, so assume that $n \geq 3$. For $\tau\in S_n$, write $\tau\sigma_\rho$ for the product of $\tau$ and $\sigma_\rho$ in the symmetric group $S_n$. \Cref{thm:231} tells us that there exists $\tau \in S_n$ such that the word $w$ obtained by concatenating $\tau$ with $\tau \sigma_\rho$ represents $\rho$ and avoids the pattern $231$.  Note that the numbers $2, \ldots, n$ appear in decreasing order in $\tau$ since otherwise we would find a $231$ pattern in $w$.  Hence, there is some $1 \leq i \leq n$ such that $\tau$ has the one-line notation
$$(n)(n-1)\cdots(n-i+2)(1)(n-i+1)\cdots (2)$$
and $\tau^{-1}$ has the one-line notation
$$(i)(n)(n-1)\cdots (i+1)(i-1)(i-2)\cdots (1).$$
Since $w$ avoids $231$, we also see that $\tau \sigma_\rho$ is a layered permutation whose layers all have size $1$ or $2$.  Now we use the formula $\sigma_\rho=\tau^{-1}(\tau \sigma_\rho)$ to show that $\sigma_\rho$ has the desired form.  We condition on the size of the first run in $\tau \sigma_\rho$.  First, suppose that the first run in $\tau \sigma_\rho$ has size $1$.  Then $\tau \sigma_\rho(1)=1$, and $\sigma_\rho(1)=i$.  In this case, one easily verifies that the standardization of $\sigma_\rho(2) \cdots \sigma_\rho(n)$ is the skew sum of some number of increasing permutations of sizes $1$ and $2$; thus, regardless of whether or not $\sigma_\rho(1)=1$, we conclude that $w$ satisfies one of the bullet-pointed conditions in the statement of \Cref{thm:alignmentless} and hence is spread-out.  Next, suppose that the first run in $\tau \sigma_\rho$ has size $2$.  Then we have $\sigma_\rho(1)=\tau^{-1}(2)=n$ and $\sigma_\rho(2)=\tau^{-1}(1)=i$, and (as in the previous case) the standardization of $\sigma_\rho(3) \cdots \sigma_\rho(n)$ is the skew sum of some number of increasing permutations of sizes $1$ and $2$.  So $w$ satisfies the second bullet-pointed condition in the statement of \Cref{thm:alignmentless} and hence is spread-out.  This completes the proof that foot-sortable alignment-free $2$-uniform sock orderings are spread-out.

We will now show that spread-out sock orderings are foot-sortable.  Let $\rho$ be a spread-out sock ordering, and let $\sigma_\rho \in S_n$ denote the corresponding permutation.  First, suppose that $\sigma_\rho$ satisfies the first bullet-pointed condition in \Cref{thm:alignmentless}.  Let $i=\sigma_\rho(1)$, and let $\tau \in S_n$ be the permutation with the one-line notation
$$(n)(n-1)\cdots(n-i+2)(1)(n-i+1)\cdots (2).$$
The word obtained by concatenating $\tau$ with $\tau \sigma_\rho$ represents $\rho$ and avoids the pattern $231$, so $\rho$ is foot-sortable by \Cref{thm:231}.
Next, suppose that $\sigma_\rho$ satisfies the second bullet-pointed condition in \Cref{thm:alignmentless}.  Let $i=\sigma_\rho(2)$, and again let $\tau \in S_n$ be the permutation with the one-line notation
$$(n)(n-1)\cdots(n-i+2)(1)(n-i+1)\cdots (2).$$
The word obtained by concatenating $\tau$ with $\tau \sigma_\rho$ represents $\rho$ and avoids the pattern $231$, so $\rho$ is foot-sortable by \Cref{thm:231}.  This completes the proof of the first statement of \Cref{thm:alignmentless}; to prove the second statement, it remains to count the spread-out alignment-free 2-uniform sock orderings.

Let $\Av_m(123,132,213)$ denote the set of permutations in $S_m$ that avoid the patterns $123, 132, 213$. It is well known that $|\Av_m(123,132,213)|$ is the Fibonacci number $F_{m+1}$.  The permutations $\sigma$ corresponding to the spread-out alignment-free 2-uniform sock orderings with $n$ colors can be described as follows, according to the two bullet points in the statement of \Cref{thm:alignmentless}:
\begin{itemize}
    \item We can choose any element of $[n-1]$ to be $\sigma(1)$, and then we can choose any element of $\Av_{n-1}(123,132,213)$ to be the standardization of $\sigma(2) \cdots \sigma(n)$.
    \item We can put $\sigma(1)=n$, choose any element of $[n-1]$ to be $\sigma(2)$, and choose any element of $\Av_{n-2}(123,132,213)$ to be the standardization of $\sigma(3) \cdots \sigma(n)$.
\end{itemize}
Summing the contributions of these two possibilities, we find that the total number of spread-out alignment-free 2-uniform sock orderings with $n$ colors is
$$(n-1)|\Av_{n-1}(123,132,213)|+(n-1)|\Av_{n-2}(123,132,213)|=(n-1)F_n+(n-1)F_{n-1}=(n-1)F_{n+1},$$
as desired.

\begin{remark}
One can show that an alignment-free 2-uniform sock ordering is spread-out if and only if it avoids all of the following sock orderings as patterns:
\begin{align*}
&\ssockr\ssockb\ssockg\ssocky\ssockp\ssockp\ssocky\ssockr\ssockb\ssockg,\quad \ssockr\ssockb\ssockg\ssocky\ssockp\ssocky\ssockp\ssockr\ssockb\ssockg, \\ &\ssockr\ssockb\ssockg\ssocky\ssockp\ssockp\ssocky\ssockr\ssockg\ssockb, \quad \ssockr\ssockb\ssockg\ssocky\ssockp\ssocky\ssockp\ssockr\ssockg\ssockb, \\ 
&\ssockr\ssockb\ssockg\ssocky\ssockp\ssockp\ssocky\ssockb\ssockr\ssockg, \quad \ssockr\ssockb\ssockg\ssocky\ssockp\ssocky\ssockp\ssockb\ssockr\ssockg,
\end{align*}
\begin{align*}
&\ssockr\ssockb\ssockg\ssocky\ssockr\ssockb\ssockg\ssocky,\quad \ssockr\ssockb\ssockg\ssocky\ssockb\ssockg\ssockr\ssocky,\quad \ssockr\ssockb\ssockg\ssocky\ssockr\ssockb\ssocky\ssockg, \\ 
&\ssockr\ssockb\ssockg\ssocky\ssockg\ssockr\ssockb\ssocky, \quad \ssockr\ssockb\ssockg\ssocky\ssockr\ssockg\ssockb\ssocky, \quad \ssockr\ssockb\ssockg\ssocky\ssockg\ssockr\ssocky\ssockb, \\
&\ssockr\ssockb\ssockg\ssocky\ssockb\ssockr\ssockg\ssocky, \quad \ssockr\ssockb\ssockg\ssocky\ssockg\ssockb\ssockr\ssocky, \quad \ssockr\ssockb\ssockg\ssocky\ssockb\ssockr\ssocky\ssockg. 
\end{align*}
(These patterns are the alignment-free 2-uniform sock orderings corresponding to the permutations $54123, 45123, 54132, 45132, 54213, 45213, 1234, 2314, 1243, 3124, 1324, 3142, 2134, 3214,$ $2143$.)
Using a computer (or some socks and a sufficient amount of patience), one can check that none of the sock orderings in this list are foot-sortable; note that this gives an alternative approach to the first half of the above proof of \Cref{thm:alignmentless}.
\end{remark}

\section{Logarithmically Many Feet}\label{sec:log}

The goal of this section is to answer the following question: For a natural number $n$, what is the minimum value of $t$ (depending on $n$) such that every sock ordering with $n$ colors is $t$-foot-sortable?  We will  prove \Cref{thm:log-main}, which states that the answer is exactly $\lceil \log_2(n) \rceil$.  

Although the motivation for defining the $t$-foot-sorting algorithm is to sort a sock ordering, one can use it to permute the socks in other ways as well. For example, one could use the foot-sorting algorithm to transform \[\begin{array}{l}\sockr\sockb\sockr\sockb\sockg\end{array}\quad\text{into}\quad\begin{array}{l}\sockb\sockr\sockr\sockg\sockb\end{array}.\] Given a sock ordering $\rho$, let $\foot(\rho)$ denote the set of sock orderings that can be obtained from $\rho$ by applying the foot-sorting algorithm with a single foot, and let $\foot^{-1}(\rho)$ denote the set of sock orderings $\kappa$ such that $\rho\in\foot(\kappa)$.  We define $\foot^0(\rho)=\{\rho\}$.  For $t>1$, we inductively define $\foot^t(\rho)=\foot(\foot^{t-1}(\rho))$ and $\foot^{-t}(\rho)=\foot^{-1}(\foot^{-t+1}(\rho))$; it is a simple exercise to verify that $\foot^t(\rho)$ is the set of sock orderings that can be obtained from $\rho$ by applying the $t$-foot-sorting algorithm. 

We can quickly prove the first statement in \Cref{thm:log-main}, which states that $\lceil \log_2(n) \rceil$ feet always suffice to sort a sock ordering with $n$ colors. This argument is very similar to one used by Tarjan in \cite{Tarjan}. 

\begin{lemma}\label{lem:log_first_part}
Every sock ordering with $n$ colors is $\left\lceil\log_2(n)\right\rceil$-foot-sortable. 
\end{lemma}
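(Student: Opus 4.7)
The plan is to induct on $n$, following Tarjan's divide-and-conquer strategy for stacks in series. The base case $n=1$ is trivial, since any sock ordering in one color is already sorted and $\lceil\log_2(1)\rceil=0$. For the inductive step, suppose the claim holds for all sock orderings with fewer than $n$ colors, and let $\rho$ be a sock ordering with $n\geq 2$ colors. Set $t=\lceil\log_2(n)\rceil$ and partition the color set into two halves $S$ and $L$ of sizes $\lceil n/2\rceil$ and $\lfloor n/2\rfloor$, each of size at most $2^{t-1}$. Let $\rho_S$ and $\rho_L$ denote the sub-sock-orderings of $\rho$ consisting only of $S$-colored and $L$-colored socks, respectively; by the inductive hypothesis, $\rho_S$, $\rho_L$, and their reverses are each $(t-1)$-foot-sortable.

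The key idea is to use $f_1$ as a router that separates $S$-socks from $L$-socks while the sub-system formed by the remaining feet $f_2,\ldots,f_t$ performs a $(t-1)$-foot-sorting algorithm. In Phase~1, feed socks from the input onto $f_1$; whenever an $S$-colored sock reaches the top of $f_1$, move it immediately to $f_2$, while $L$-colored socks remain on $f_1$. Interleaved with these operations, the sub-system executes the $(t-1)$-foot algorithm guaranteed by the inductive hypothesis to sort $\rho_S$. This is legitimate because the $S$-socks arrive at $f_2$ in exactly the order in which they appear in $\rho_S$, so from the sub-system's perspective its input stream is precisely $\rho_S$. At the end of Phase~1, a sorted form of $\rho_S$ lies on the output, the feet $f_2,\ldots,f_t$ are empty, and $f_1$ holds the $L$-socks in the reverse of their order in $\rho_L$. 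In Phase~2, pop the $L$-socks from $f_1$ to $f_2$ one at a time, thereby feeding the reverse of $\rho_L$ into the sub-system, which sorts it by the inductive hypothesis. Since $S$ and $L$ use disjoint color sets, the concatenation of a sorted form of $\rho_S$ followed by a sorted form of the reverse of $\rho_L$ is itself a sorted sock ordering.

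The main point to verify (rather than a substantive obstacle) is that the interleaving in Phase~1 is legal: $f_1$'s operations (pushing from the input onto $f_1$ and transferring an $S$-sock from $f_1$ to $f_2$) are independent of the sub-system's operations (moves among $f_2,\ldots,f_t$ and pops from $f_t$ to the output), except for the shared action of pushing a sock onto $f_2$, which we synchronize with the sub-system's requests for the next input sock. Once this is formalized, the induction closes and the lemma follows.
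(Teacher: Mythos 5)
Your proposal is correct and follows essentially the same Tarjan-style divide-and-conquer as the paper: split the colors into two halves, use the first foot to pass one half through in order while holding (and thus reversing) the other half, and sort each half with the remaining $t-1$ feet by induction. The paper merely packages the interleaving differently, decomposing the $t$-foot algorithm into $t$ sequential single-foot passes (so the first pass outputs the concatenation $\kappa_A\kappa_B$ and the recursion runs ``in parallel'' on the two blocks), which avoids the synchronization bookkeeping you handle at the end.
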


\begin{proof}
It suffices to prove the theorem when $n$ is a power of $2$; say $n=2^k$.  We proceed by induction on $k$.  The base case $k=0$ is trivial since any sock ordering with a single color is already sorted.

Now suppose $k\geq 1$, and assume that all sock orderings with $2^{k-1}$ colors are $(k-1)$-foot-sortable. Let $\rho$ be a sock ordering with $2^{k}$ colors.  Partition the set of colors into two sets $A,B$, each of size $2^{k-1}$.  Apply the foot-sorting algorithm once to $\rho$ so that in the resulting sock ordering $\rho' \in \foot(\rho)$, all of the socks with colors from $A$ appear to the left of all of the socks with colors from $B$.  (For instance, sort according to the rules that every sock with color from $A$ is removed from the foot immediately after being put on the foot and that no sock with color from $B$ is removed from the foot until all socks with colors from $A$ have been removed.)  Write $\rho'$ as a concatenation $\rho'=\kappa_A \kappa_B$, where $\kappa_A$ and $\kappa_B$ contain only socks with colors from $A$ and from $B$, respectively.  By the induction hypothesis, each of $\kappa_A$ and $\kappa_B$ can be sorted using $k-1$ further applications of foot-sorting.  Since these further applications can be run in parallel, we conclude that $\rho$ is $k$-foot-sortable.
\end{proof}

It is much more difficult to show that some sock orderings with $n$ colors require $\lceil \log_2(n) \rceil$ feet to get sorted. To do so, we will study a particular class of sock orderings and show that they obey a Ramsey-theoretic property.  Say that an $r$-uniform sock ordering with $n$ colors is \dfn{stratified} if it is the concatenation of $r$ chunks of size $n$, where each color appears once in each chunk.  For example, the $2$-uniform stratified sock orderings with $3$ colors are 
\[\sockr\sockb\sockg\sockr\sockb\sockg,\quad \sockr\sockb\sockg\sockr\sockg\sockb,\quad \sockr\sockb\sockg\sockb\sockr\sockg,\] \[ \sockr\sockb\sockg\sockb\sockg\sockr,\quad \sockr\sockb\sockg\sockg\sockr\sockb,\quad \sockr\sockb\sockg\sockg\sockb\sockr.\] The following lemma says that if you apply the foot-sorting algorithm to a stratified sock ordering with $n$ colors that has huge uniformity, then you are always guaranteed to find (as a pattern) a stratified sock ordering with $\lceil n/2 \rceil$ colors that has pretty big uniformity.

\begin{lemma}\label{lem:ramsey}
Let $n \geq 2$ and $r$ be positive integers, and set $r'=r^2\binom{n}{\left\lceil n/2\right\rceil}$.  If $\rho$ is an $r'$-uniform stratified sock ordering with $n$ colors, then every sock ordering in $\foot(\rho)$ contains an $r$-uniform stratified sock ordering with $\lceil n/2 \rceil$ colors as a pattern.
\end{lemma}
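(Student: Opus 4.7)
The plan is a pigeonhole step followed by a Dilworth/Erd\H os--Szekeres-style extraction. I fix an arbitrary $\tau\in\foot(\rho)$ together with a specific push--pop sequence $\mathcal{S}$ producing $\tau$, and partition the execution into phases: phase $i$ runs from the push of chunk $i$'s first sock up to (but not including) the push of chunk $i+1$'s first sock. The pops performed during phase $i$ output to a consecutive interval $I_i$ of $\tau$, and the intervals $I_1<I_2<\cdots<I_{r'}$ partition $[N]$.

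For each chunk $i\in[r']$ I assign a signature $A_i\subseteq[n]$ of size exactly $\lceil n/2\rceil$, namely the $\lceil n/2\rceil$ colors whose socks in chunk $i$ are popped first in the pop order determined by $\mathcal S$.  Because there are only $\binom{n}{\lceil n/2\rceil}$ possible signatures and $r'=r^2\binom{n}{\lceil n/2\rceil}$ chunks, the pigeonhole principle produces a common signature $A$ shared by $r^2$ chunks $i_1<i_2<\cdots<i_{r^2}$.  For each such chunk I write $E_k\subseteq[N]$ for the set of $\tau$-positions of the $A$-colored socks of chunk $i_k$; then $|E_k|=\lceil n/2\rceil$, the $E_k$'s are pairwise disjoint, and (by the very definition of $A_i$) every position in $E_k$ precedes every $\tau$-position of a non-$A$-colored sock from chunk $i_k$.

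The goal then reduces to finding indices $k_1<\cdots<k_r$ with $\max E_{k_\ell}<\min E_{k_{\ell+1}}$ for every $\ell$: the sub-word of $\tau$ at positions $E_{k_1}\cup\cdots\cup E_{k_r}$, read in position order, is then a concatenation of $r$ permutations of $A$, which is precisely an $r$-uniform stratified sock ordering with $\lceil n/2\rceil$ colors.  To produce such indices I apply Dilworth's theorem to the partial order on $\{E_1,\ldots,E_{r^2}\}$ given by $E\prec E'$ iff $\max E<\min E'$.  This yields either a chain of length $r$ (which immediately gives the required configuration) or an antichain of length $r$ whose associated intervals $[\min E_k,\max E_k]$ pairwise overlap and therefore, by the one-dimensional Helly theorem, share a common point $x\in[N]$.

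The main obstacle is the antichain case. The plan for it is to split each straddling footprint $E_k$ at $x$ into a left part (colors in $A$ popped at positions $\le x$) and a right part (colors in $A$ popped at positions $>x$), both nonempty; these choices induce a nontrivial partition of $A$ associated with each of the $r$ offending chunks.  A secondary pigeonhole on these at most $2^{\lceil n/2\rceil}-2$ partitions, exploiting the fact that the initial pigeonhole supplies $r^2$ chunks rather than merely $r$, lets one match ``left halves'' from earlier chunks with complementary ``right halves'' from later chunks in order to assemble the $r$ sequentially ordered blocks.  The structural input that makes this feasible is the rigidity enforced by the signature --- within each pigeonholed chunk, the $A$-colored socks occupy an initial segment of that chunk's pop image --- which is what restricts the possible configurations of the $E_k$'s enough to rule out or circumvent the Helly obstruction within the quantitative budget $r^2\binom{n}{\lceil n/2\rceil}$ given in the hypothesis.
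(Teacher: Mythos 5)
There is a genuine gap here, and it is not confined to the admittedly sketchy antichain case: the reduction itself is false. Your signature $A_i$ records only the pop order \emph{within} chunk $i$, with no reference to when those pops happen relative to the pushes of later chunks, and you then commit to finding the stratified pattern entirely among $A$-colored socks of same-signature chunks. Consider $n=3$, chunks $a_kb_kc_k$ for $k=1,\ldots,r'$, and the execution that, for each $k$ in turn, pushes $a_k,b_k,c_k$ and immediately pops $c_k$, and only at the very end pops the remaining stack $b_{r'},a_{r'},b_{r'-1},a_{r'-1},\ldots,b_1,a_1$. Every chunk has pop order $c_k,b_k,a_k$, hence signature $A=\{b,c\}$, so your pigeonhole returns all the chunks. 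But the output is $\tau=c^{r'}(ba)^{r'}$, in which every $c$ precedes every $b$; hence among $\{b,c\}$-colored socks there is no occurrence of even a $2$-uniform stratified ordering with $2$ colors (any selection is the aligned pattern $ccbb$). Concretely, the footprints are $E_k=\{k,\,3r'-2k+1\}$, which are pairwise incomparable in your order, so no chain of length $r\ge 2$ exists; and the splitting-at-$x$ rescue cannot work either, since every footprint splits as left $=\{c\}$, right $=\{b\}$, so any assembled block straddles the common point and no two blocks can be sequentially ordered. The desired pattern does exist in $\tau$, but only on the colors $\{a,b\}$ --- colors \emph{outside} your signature, which your setup has discarded. (A secondary issue: even where the antichain plan could apply, Dilworth yields an antichain of size only about $r$, not $r^2$, so the proposed secondary pigeonhole over the at most $2^{\lceil n/2\rceil}-2$ splittings has essentially nothing to act on.)

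The missing idea is a dichotomy keyed to the moment the \emph{last sock of a chunk is pushed}, together with using the socks that are \emph{not} popped early. The paper groups the $r'$ size-$n$ chunks into $m=r\binom{n}{\lceil n/2\rceil}$ blocks $\gamma_k$ of $r$ chunks each and lets $Y_k$ be the set of colors of socks of $\gamma_k$ that are popped before the last sock of $\gamma_k$ is pushed. If some $|Y_k|\le\lfloor n/2\rfloor$, then $\lceil n/2\rceil$ colors of $\gamma_k$ have all their socks simultaneously on the foot at that moment; they emerge in reversed order, and the reversal of a stratified ordering is stratified --- this is exactly what rescues the example above, via $\{a,b\}$. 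If instead every $|Y_k|\ge\lceil n/2\rceil$, a pigeonhole gives a common $Q\subseteq Y_k$ for $r$ of the blocks; because membership in $Y_k$ is defined relative to $\gamma_k$'s last push, the chosen socks of $\gamma_{k_1}$ are popped before any sock of $\gamma_{k_2}$ is even pushed, so the $r$ blocks are automatically sequentially ordered and no Dilworth/Helly analysis is needed. To repair your argument, replace the pop-order signature with this push-time-relative notion and add the complementary ``still on the foot'' case.
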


\begin{proof}
Set $m=r'/r=r\binom{n}{\left\lceil n/2\right\rceil}$, and write $\rho$ as a concatenation $$\rho=\gamma_1 \gamma_2 \cdots \gamma_m,$$ where each $\gamma_k$ has $nr$ socks.  (So each $\gamma_k$ is an $r$-uniform stratified sock ordering with $n$ colors.)  Imagine applying the foot-sorting algorithm to transform $\rho$ into some sock ordering $\kappa\in\foot(\rho)$. For each $k\in[m]$, let $X_k$ be the set of socks in $\gamma_k$ that are removed from the foot before the last sock in $\gamma_k$ is put onto the foot. Let $Y_k$ be the set of distinct colors appearing in $X_k$. We consider two cases. 

\medskip
\noindent {\bf Case 1.} Suppose there exists $k\in [m]$ such that $|Y_k|\leq \left\lfloor n/2\right\rfloor$. Let $Q$ be a subset of $[n]\setminus Y_k$ of size $\lceil n/2 \rceil$. At the moment when the last sock in $\gamma_k$ is put on the foot, the socks of $\gamma_k$ with colors in $Q$ are all on the foot, and they appear in the same order (read from inside to outside) in which they appeared in $\rho$; note that these socks in $\rho$ form a pattern that is an $r$-uniform stratified sock ordering with $\lceil n/2 \rceil$ colors.  After the entire foot-sorting algorithm is complete, these same socks appear in $\kappa$ in the opposite order from how they appeared in $\rho$.  Since the reversal of a stratified sock ordering is still stratified, we have succeeded in finding the desired pattern in $\kappa$.

\medskip
\noindent {\bf Case 2.} Suppose $|Y_k|\geq\left\lceil n/2\right\rceil$ for every $k\in[m]$. Since $m=r\binom{n}{\left\lceil n/2\right\rceil}$, we can use the Pigeonhole Principle to find a subset $Q$ of $[n]$ of size $\lceil n/2 \rceil$ such that there are at least $r$ indices $k\in [m]$ with $Q\subseteq Y_k$; let $k_1<\cdots<k_r$ denote $r$ such indices.  For each $k_i$, choose a subset $A_i \subseteq X_{k_i}$ of size $\lceil n/2 \rceil$ such that $A_i$ has one sock of each color in $Q$.  The definition of $X_{k_1}$ ensures that all of the socks in $A_1$ are removed from the foot before the last sock of $\gamma_{k_1}$ is put on the foot.  After this, but before the last sock of $\gamma_{k_2}$ is put on the foot, all of the socks in $A_2$ are removed from the foot.  Continuing in this fashion, we find a sock pattern in $\kappa$ consisting of some permutation of the socks in $A_1$, followed by some permutation of the socks in $A_2$, and so on, up to some permutation of the socks in $A_r$.  But this is precisely an occurrence of an $r$-uniform stratified sock ordering with $\lceil n/2 \rceil$ colors, as desired.
\end{proof}

We are now ready to prove \Cref{thm:log-main}, which says that every sock ordering with $n$ colors is $\lceil \log_2(n) \rceil$-foot-sortable and that sometimes $\lceil \log_2(n) \rceil$ feet are required.

\begin{proof}[Proof of \Cref{thm:log-main}]
The first statement in \Cref{thm:log-main} is \Cref{lem:log_first_part}. To prove the second statement, let $r(2)=2$, and let \[r(n)=r(\left\lceil n/2\right\rceil)^2\binom{n}{\lceil n/2\rceil}\] for all $n\geq 3$. We will prove that for every $n\geq 2$, no $r(n)$-uniform stratified sock ordering with $n$ colors is $(\left\lceil\log_2(n)\right\rceil-1)$-foot-sortable. This is easy when $n=2$ because the two different $2$-uniform stratified sock orderings with 2 colors are $abab$ and $abba$, neither of which is sorted. We now assume $n\geq 3$ and proceed by induction on $n$. 

Let $\rho$ be an $r(n)$-uniform stratified sock ordering with $n$ colors. Our goal is to show that no sock ordering in $\foot(\rho)$ is $(\left\lceil\log_2(n)\right\rceil-2)$-foot-sortable. Choose any $\kappa\in\foot(\rho)$. According to \Cref{lem:ramsey}, the sock ordering $\kappa$ contains an $r(\lceil n/2\rceil)$-uniform stratified sock ordering $\eta$ with $\lceil n/2\rceil$ colors as a pattern. By induction, $\eta$ is not $(\left\lceil\log_2(\left\lceil n/2\right\rceil)\right\rceil-1)$-foot-sortable. This implies that $\kappa$ is also not $(\left\lceil\log_2(\left\lceil n/2\right\rceil)\right\rceil-1)$-foot-sortable, and the theorem now follows from the simple fact that $\left\lceil\log_2(\left\lceil n/2\right\rceil)\right\rceil=\left\lceil\log_2(n)\right\rceil-1$. 
\end{proof}

One can show that the function $r(n)$ defined in the preceding proof satisfies $r(n)= n^{(1+o(1))n}$.  We conclude this section by showing the existence of sock orderings with less enormous uniformity that still require many feet to get sorted.  Let $C_N=\frac{1}{N+1}\binom{2N}{N}$ denote the $N$-th Catalan number. We will make use of the well-known estimate 
\begin{equation}\label{eq:4n}
C_N\leq 4^N.
\end{equation}

\begin{lemma}\label{prop:catalan}
If $\rho$ is a sock ordering of length $N$, then $|\foot^{-1}(\rho)|\leq C_N$.
\end{lemma}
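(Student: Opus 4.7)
The plan is to encode each run of the foot-sorting algorithm by a Dyck word of semilength $N$ and then observe that, given the output $\rho$ together with such a Dyck word, the input is uniquely determined. A run of the foot-sorting algorithm on a length-$N$ sock ordering consists of $N$ push moves (placing a sock onto the foot) and $N$ pop moves (removing a sock from the foot), subject to the constraint that one may never pop from an empty foot. The valid schedules of push/pop moves are therefore in natural bijection with Dyck words of semilength $N$, of which there are exactly $C_N$.

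For each $\kappa \in \foot^{-1}(\rho)$, I would fix (arbitrarily) one schedule of foot-sorting moves that sends $\kappa$ to $\rho$ and record its underlying Dyck word $D_\kappa$; this defines a map $\Phi \colon \foot^{-1}(\rho) \to \{\text{Dyck words of semilength } N\}$. The heart of the proof is to show that $\Phi$ is injective, which I would do by reconstructing $\kappa$ from the data $(\rho, D_\kappa)$ alone. The standard matching associated to a Dyck word pairs the $j$-th push with the unique pop that removes the sock placed by that push, say the $\pi(j)$-th pop. Reading $\rho$ from left to right, the $k$-th pop of the schedule must deposit the sock in position $k$ of $\rho$; hence the $j$-th push placed a sock whose color is the $\pi(j)$-th entry of $\rho$. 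This recovers $\kappa$ as a sequence of colors, and because all colors are read directly off $\rho$, the reconstruction respects the equivalence on sock orderings under renaming. Injectivity of $\Phi$ then yields $|\foot^{-1}(\rho)| \le C_N$.

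I do not foresee a serious obstacle here. The only step requiring care is the matching/reconstruction bookkeeping, which is precisely the classical correspondence between stack operations and noncrossing pairings on $[2N]$; phrasing the argument in those terms keeps it short and avoids any induction on $N$.
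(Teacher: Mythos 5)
Your proposal is correct and follows essentially the same route as the paper: encode each sorting run by a Dyck word of semilength $N$ and map $\foot^{-1}(\rho)$ injectively into the set of such words, which has size $C_N$. The only difference is that you spell out the injectivity via the push--pop matching and reconstruction of $\kappa$ from $(\rho, D_\kappa)$, a detail the paper leaves implicit.
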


\begin{proof}
Choose $\kappa \in \foot^{-1}(\rho)$.  Consider some way of running the foot-sorting algorithm to obtain $\rho$ from $\kappa$.  Record the letter U each time you put a sock onto the foot, and record the letter D each time you take a sock off of the foot. This yields a sequence $\text{Dyck}_\rho(\kappa)$ containing $N$ U's and $N$ D's with the property that each initial subsequence contains at least at many U's as D's. Such a sequence is called a \dfn{Dyck word}. For example, the Dyck word corresponding to the application of the foot-sorting algorithm in \Cref{Fig2} is UUDUUDUDDUUDDD. The resulting map $\kappa \mapsto \text{Dyck}_\rho(\kappa)$ is an injection from $\foot^{-1}(\rho)$ to the set of Dyck words of length $2N$. It is well known that the number of Dyck words of length $2N$ is $C_N$, so this completes the proof. 
\end{proof}

\begin{proposition}\label{prop:weak-log-bound}
Let $n,r \geq 2$ be integers.  There exists an $r$-uniform sock ordering with $n$ colors that is not $(\left\lfloor\frac{r-1}{r}\log_4(n)\right\rfloor-1)$-foot-sortable.
\end{proposition}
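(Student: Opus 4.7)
The plan is a counting argument built on \Cref{prop:catalan}. Because sock orderings are identified up to color renaming, there is exactly one sorted $r$-uniform sock ordering with $n$ colors, namely the equivalence class $\tau_0$ of the sock ordering in which the $n$ colors appear in $n$ consecutive blocks of length $r$. Consequently, every $t$-foot-sortable $r$-uniform sock ordering with $n$ colors lies in $\foot^{-t}(\tau_0)$. Iterating \Cref{prop:catalan} through the relation $\foot^{-t}(\tau_0) = \foot^{-1}(\foot^{-(t-1)}(\tau_0))$ yields
\[|\foot^{-t}(\tau_0)| \;\leq\; C_{rn}^t \;\leq\; 4^{rnt},\]
where the second inequality is the standard estimate $C_N \leq 4^N$.

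Next, I would bound the total count from below. The number of $r$-uniform sock orderings with $n$ colors equals the number of set partitions of $[rn]$ into $n$ unordered blocks of size $r$, namely $\frac{(rn)!}{(r!)^n n!}$. Using the elementary bounds $(rn)! \geq (rn/e)^{rn}$, $(r!)^n \leq r^{rn}$, and $n! \leq n^n$, I obtain
\[\frac{(rn)!}{(r!)^n n!} \;\geq\; \frac{(rn)^{rn}/e^{rn}}{r^{rn}\, n^n} \;=\; \frac{n^{(r-1)n}}{e^{rn}}.\]

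To finish, I would plug in $t = \left\lfloor\tfrac{r-1}{r}\log_4 n\right\rfloor - 1 \leq \tfrac{r-1}{r}\log_4 n - 1$ to get
\[4^{rnt} \;\leq\; 4^{(r-1)n\log_4 n - rn} \;=\; \frac{n^{(r-1)n}}{4^{rn}}.\]
Since $4 > e$, the right-hand side is strictly smaller than the lower bound $n^{(r-1)n}/e^{rn}$ on the total count. Hence the total number of $r$-uniform sock orderings with $n$ colors is strictly greater than the number of $t$-foot-sortable ones, so at least one such sock ordering must fail to be $t$-foot-sortable, which is the desired conclusion. The only place that requires any care is this final numerical comparison, and it reduces to the comfortable inequality $4 > e$; everything else is routine Stirling-type bookkeeping, and there is no serious obstacle.
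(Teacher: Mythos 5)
Your proposal is correct and follows essentially the same route as the paper: iterate \Cref{prop:catalan} to bound the number of $t$-foot-sortable $r$-uniform orderings by $C_{nr}^t\le 4^{rnt}$, bound $|U_{n,r}|=\frac{(nr)!}{n!(r!)^n}$ from below by Stirling-type estimates, and compare. The only (harmless) difference is that you use slightly cruder factorial bounds, arriving at the lower bound $n^{(r-1)n}e^{-rn}$ rather than the paper's $n^{nr-n-1}r^{-n}$, and you carry out explicitly the final comparison (reducing it to $4>e$) that the paper leaves as a ``simple exercise.''
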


\begin{proof}
Let $U_{n,r}$ denote the set of $r$-uniform sock orderings with $n$ colors, and let $\kappa_0$ be the unique sorted sock ordering in 
 $U_{n,r}$.  Induction on $k$ and iterative applications of \Cref{prop:catalan} yield the inequality
$$|\foot^{-k}(\kappa_0)| \leq C_{nr}^k$$
for all positive integers $k$.  Note that $\foot^{-k}(\kappa_0)$ is precisely the set of $k$-foot-sortable elements of $U_{n,r}$.  If $|\foot^{-k}(\kappa_0)|<|U_{n,r}|$, then we can conclude that there exists a sock ordering in $U_{n,r}$ that is not $k$-foot sortable.  

Using the well-known inequalities $m^me^{-m+1}<m!<m^{m+1}e^{-m+1}$, we find that 
\[|U_{n,r}|=\frac{(nr)!}{n!(r!)^n} \geq\frac{(nr)^{nr}e^{-nr+1}}{n^{n+1}e^{-n+1}(r^{r+1}e^{-r+1})^n} =n^{nr-n-1}r^{-n}.\] We also know by \eqref{eq:4n} that $C_{nr}^k<4^{knr}$. It is a simple exercise to check that ${4^{knr}<n^{nr-n-1}r^{-n}}$ when $k=\left\lfloor\frac{r-1}{r}\log_4(n)\right\rfloor-1$.
\end{proof}

When $r=2$, \Cref{thm:log-main,prop:weak-log-bound} tell us that the minimum number of feet needed to sort every sock ordering with $n$ colors is between $\left\lfloor\frac{1}{2}\log_4(n)\right\rfloor$ and $\left\lceil\log_2(n)\right\rceil$; when $n$ is large, these bounds differ by a constant factor of around $4$. When $r$ is much larger than $2$, the fraction $\frac{r-1}{r}$ is close to $1$, so the bounds $\left\lfloor\frac{r-1}{r}\log_4(n)\right\rfloor$ and $\left\lceil\log_2(n)\right\rceil$ differ by a constant factor of around $2$.

\section{Further Directions}\label{sec:further}
The purpose of this final section is to raise several questions and directions that seem promising for future inquiry into foot-sorting; we have barely scratched the surface in this paper.  

Recall that we have defined notions of pattern containment and avoidance for sock orderings that correspond precisely to Klazar's notions of pattern containment and avoidance for set partitions. If $\Omega$ is a set of sock orderings that is closed under pattern containment, then the \dfn{basis} of $\Omega$ is the set $B$ of minimal (under the pattern containment partial order) sock orderings that are not in $\Omega$. One can characterize $\Omega$ as the set of sock orderings that avoid all of the patterns in $B$. Recall that the set of foot-sortable sock orderings is closed under pattern containment. 

\begin{question}\label{ques:avoiding}
Is the basis of the set of foot-sortable sock orderings finite?  How about the basis of the set of foot-sortable sock orderings in which each color is used at most twice?
\end{question}

\begin{remark}
In the time since the original preprint of this article was released, Yu has resolved \Cref{ques:avoiding} by showing that both of the bases considered in that question are infinite \cite{YuPrivate}. Yu has also provided a polynomial-time algorithm for deciding whether a given sock ordering is foot-sortable.
\end{remark}

It would also be interesting to enumerate the $2$-uniform foot-sortable sock orderings with $n$ colors.

Another line of investigation concerns bounds for ``worst-case sorting,'' as studied in \Cref{sec:log}. 

\begin{question}\label{Quest:t_2}
What is the smallest integer $t_2(n)$ such that every $2$-uniform sock ordering with $n$ colors is $t_2(n)$-foot-sortable?
\end{question}

Recall that, as discussed after the proof of \Cref{prop:weak-log-bound}, we have \[\left\lfloor\textstyle{\frac{1}{2}}\log_4(n)\right\rfloor\leq t_2(n)\leq\left\lceil \log_2(n) \right\rceil.\]  

Define ${\bf r}(n)$ to be the smallest integer such that there exists an ${\bf r}(n)$-uniform sock ordering with $n$ colors that is not $(\left\lceil\log_2(n)\right\rceil-1)$-foot-sortable. \Cref{thm:log-main} states that ${\bf r}(n)$ is finite, and it follows from our proof of this theorem that ${\bf r}(n)\leq n^{(1+o(1))n}$. This function remains mysterious, and it is not even obvious whether or not ${\bf r}(n)$ tends to infinity with $n$. 
\begin{question}
What is ${\bf r}(n)$? Is it true that $\lim\limits_{n\to\infty}{\bf r}(n)=\infty$? 
\end{question}

Avis and Newborn \cite{Avis} introduced a variant of Knuth's stack data structure called a \dfn{pop-stack} (see also \cite{AtkinsonSack,SmithVatter}). A pop-stack is just like a stack, except it has the additional property that whenever an object is removed from the pop-stack, \emph{all} objects must be removed. One could analogously consider sorting socks using \dfn{pop-feet}, which are just like feet except that whenever a sock is removed from a pop-foot, \emph{all} socks must be removed from the pop-foot. It would be interesting to study $t$-pop-foot sortable sock orderings.

In the stack-sorting and pop-stack sorting literature, one can consider sorting permutations using stacks or pop-stacks that are arranged in series or in parallel (see \cite{AlbertBousquet,AtkinsonSack,SmithVatter}). Our $t$-foot-sorting algorithm uses $t$ feet in series, but one could also consider what happens when the feet are arranged in parallel. In this setting, we imagine $t$ feet arranged in a line as before. At each point in time, the leftmost sock lying to the right of the feet can be placed directly onto one of the feet (and any of the feet can be chosen), or a sock can be removed from one of the feet and put into the output sock ordering. It could be interesting to study which sock orderings can be sorted using $t$ feet in parallel. One could also consider mixtures of feet and pop-feet in series and in parallel, as in \cite{SmithVatter}.  

Finally, we mention that Xia \cite{Xia} has introduced deterministic versions of foot-sorting (in which stacks are used in place of feet).

\section*{Acknowledgments}
We thank Anshul Adve for providing the initial real-world motivation for the investigations in this paper.  We are also grateful to Noga Alon, Alex Postnikov, and Vince Vatter for helpful discussions. We thank the anonymous referees for providing several helpful comments, including suggestions for how to simplify the proof of \cref{thm:alignmentless}. Colin Defant was supported by the National Science Foundation under Award No. 2201907 and by a Benjamin Peirce Fellowship at Harvard University, and Noah Kravitz was supported by the NSF Graduate Research Fellowship Program under grant DGE--2039656.

\end{document}